\def\cref#1{Corollary~\ref{#1}}
\newcommand{\bO}{{\bf\Omega}}
\newcommand{\C}{\mathbb C}
\newcommand{\Z}{\mathbb Z} 
\newcommand{\R}{\mathbb R}
\newcommand{\Q}{\mathbb Q}
\renewcommand{\k}{{\mathfrak{k}}{}}
\renewcommand{\k}{{\mathfrak{k}}{}}
\newcommand{\CO}{{\mathcal O}{}}
\newcommand{\CD}{\mathcal D}
\newcommand{\CH}{\mathcal H}
\newcommand{\CB}{\mathcal B}
\newcommand{\CF}{\mathcal F}
\newcommand{\vf}{\varphi}
\renewcommand{\k}{{\bf k}}
\newtheorem{thm}{Theorem}[section] 
\newtheorem{rmk}[thm]{Remark}
\newtheorem{prop}[thm]{Proposition}
\newtheorem{corr}[thm]{Corollary}
\newtheorem{lem}[thm]{Lemma}
\title{A locally compact quantum group arising from quantization
of the affine group of a local field}
 \date{}
 \author{David Jondreville}
 \address{Laboratoire de Math\'ematique de Reims, CNRS FRE 2011, David.Jondreville@ac-reims.fr}
\begin{document}
\maketitle
\begin{abstract}
Using methods coming from non-formal equivariant  quantization, 
we construct in this short note a unitary \emph{dual 2-cocycle} on a discrete family of quotient groups of  subgroups
of the affine group of a local field  (which is not of characteristic $2$, nor an extension of $\Q_2$). 
Using results of  De Commer about Galois objects in operator algebras, 
we obtain  new examples of  locally compact quantum groups in the setting of von Neumann algebras.
\end{abstract}

{\small \emph{Keywords:} Equivariant quantization,  Locally compact quantum groups, Local fields}

\section{Introduction}

The construction of concrete examples of  locally compact quantum groups  (LCQG) in the setting of  von Neumann algebras \cite{KV1,KV2} is still of upmost importance.
The seminal work of De Commer \cite{dC} provides  a way to construct a new LCGQ from a Galois object on a given LCQG. 
An important class of Galois objects are associated with  $2$-cocycles \cite[Section 5]{dC}. 
Recall that a   \emph{unitary dual $2$-cocycle} on a LCQG   $(\mathcal N,\Delta)$, with dual
quantum group
$(\hat{\mathcal N},\hat\Delta)$ (we use the standard notations, see e.g$.$ \cite{dC,NT}), 
is a unitary  element $F\in \hat{\mathcal N}\bar\otimes \hat{\mathcal N}$ satisfying the cocycle relation:
\begin{equation}
\label{2CC}
(1\otimes F)(\iota\otimes \hat\Delta)(F)=(F\otimes 1)(\hat\Delta\otimes\iota)(F).
\end{equation}

At the level of an ordinary locally compact group $G$, the explicit construction of a unitary dual $2$-cocycle $F$  is already  a non-trivial task. In that case,
$F$   is a unitary element of the group von Neumann algebra $W^*(G\times G)$
and the cocycle relation is equivalent
(see \cite[Section 4]{NT}) to the associativity of the following left-equivariant deformed 
product of the Fourier algebra $A(G)$:
$$
f_1\star f_2(g)= \check f_1 \otimes \check f_2\big(\lambda_g\otimes\lambda_g\, F^*\big).
$$ 
In the formula above, $\lambda$ is the left regular representation $\lambda_g(f)(g')=f(g^{-1}g')$
and $f \mapsto\check f$ is the standard identification of  $A(G)$ with  the pre-dual of $ W^*(G)$.

Hence, there is a simple strategy to construct dual $2$-cocycles on groups. One starts with a left-equivariant
and associative product $\star$ on $A(G)$ (or on a dense subalgebra of it), written in term of a distribution 
(in the sense of Bruhat 
\cite{Bruhat}) $K$ on $G\times G$  as follows:
$$
f_1\star f_2=\int_{G\times G} K(g_1,g_2)\;\rho_{g_1}(f_1)\;\rho_{g_2}(f_2)\;dg_1dg_2,
$$
where $\rho$ is right regular action $\rho_g(f)(g')=f(g'g)$ and $dg$ is a left-invariant Haar measure.
One then considers the following (formal) convolution operator:
$$
F:= \int_{G\times G} \overline{K(g_1,g_2)}\;\lambda_{g_1^{-1}}\otimes\lambda_{g_2^{-1}}\;dg_1dg_2.
$$
All what remains to do then, is to verify that this operator  makes  sense and is unitary on $L^2(G)$ (for the left-invariant Haar measure).
Now, the point is that an associative and left-equivariant product on a group $G$ can be constructed out of a non-formal, $G$-equivariant  quantization map on $G$. 
By this we mean a triple $(\bO,\CH,\alpha)$ consisting in a unitary operator $\bO$ from $L^2(G)$ to the Hilbert space 
$\mathcal L^2(\CH)$ of Hilbert-Schmidt
operators on a separable Hilbert space $\CH$, together with an action of $G$ on $\CB(\CH)$ satisfying the covariance relation:
$$
\bO\big(\lambda_g(f)\big)=\alpha_g\big(\bO(f)\big),\quad\forall f\in L^2(G).
$$
One can then define an associative and left-equivariant product on $L^2(G)$ by transporting the product of $\mathcal L^2(\CH)$ to $L^2(G)$:
$$
f_1\star f_2:=\bO^*\big(\bO(f_1)\,\bO(f_2)\big).
$$

  In \cite{GJ2}, we have constructed such a non-formal, equivariant  quantization map and the aim of this short note is to check that the 
  associated candidate for
  a unitary dual $2$-cocycle satisfies this unitary property.  
  The family of groups we consider here are quotient groups of subgroups of the affine group of a local field. To simplify a little this introductive presentation,
  let us consider the case of the field of $p$-adic numbers $\Q_p$ with $p$ an odd prime number. Denoting by $\Z_p$ the ring of integers of $\Q_p$, 
  our groups  are 
  unimodular semidirect products of the form  $(1+p^n\Z_p)\ltimes (\Q_p/p^{-n}\Z_p)$. We then have a unitary quantization map (see     \cite[Proposition 3.9]{GJ2})
  $$
  \bO: L^2\big((1+p^n\Z_p)\ltimes (\Q_p/p^{-n}\Z_p)\big)\to \mathcal L^2\big(L^2(1+p^n\Z_p)\big),
  $$
   defined for $f\in L^1\cap L^2\big((1+p^n\Z_p)\ltimes (\Q_p/p^{-n}\Z_p)\big)$ by
  $$
   \bO(f)=p^{n}\sum_{[t]\in \Q_p/p^{-n}\Z_p}\int_{1+p^n\Z_p}f(a,[t])\,\pi(a,t)\Sigma\pi^*(a,t) \,da.
  $$
  Here $\Sigma$ is the (bounded) operator on  $L^2(1+p^n\Z_p)$ given by $\Sigma\vf(a)=\vf(a^{-1})$, $\pi$ is the irreducible unitary representation of
  $(1+p^n\Z_p)\ltimes \Q_p$  (a subgroup of the affine group of $\Q_p$) given by 
  $$
  \pi(a,t)\varphi(a_0) :=\Psi( a_0^{-1}at) \,
\varphi\big(a^{-1}a_0\big),
$$
and $\Psi$ is the basic unitary character of $(\Q_p,+)$ given by
$$
\Psi(x)=e^{2i\pi\sum_{n=-k}^{-1}a_np^n}\quad \mbox{if}\quad x=\sum_{n=-k}^{\infty}a_np^n\in \Q_p,\quad a_n\in\{0,1,\dots,p-1\}.
$$
  The candidate for the unitary dual $2$-cocycle associated with this quantization map is given by:
  $$
 F:= p^{2n}\sum_{[t_1],[t_2]}\int
 \Psi\big((a_1-a_1^{-1})t_2-(a_2-a_2^{-1})t_1\big)\,
 \lambda_{(a_1,[t_1])^{-1}}\otimes  \lambda_{(a_2,[t_2])^{-1}}\, da_1da_2,
 $$
 where the sum is over $( \Q_p/p^{-n}\Z_p)^2$ and the integral is over $(1+p^n\Z_p)^2$.
 
 Our main result  here, Theorem \ref{Th},  is that $F$   indeed defines  a unitary operator,
 the $2$-cocyclicity being automatic. However, for the sake of completeness, we give a direct proof of $2$-cocyclicity in the Appendix.
 As mentioned 
 earlier, De Commer's results then give  an example of a locally compact quantum group in the von Neumann algebraic setting.
 Moreover, the results of \cite{NT}  apply too and give a deformation theory for actions of that groups on $C^*$-algebras.
  
\section{Preliminaries}

In this section, we first introduce the basic notations and tools we need. We then give a review 
of the   pseudo-differential calculus  constructed in \cite{GJ2} and extract a dual $2$-cocycle out of the associated $\star$-product
(that is, the composition law of symbols).  This pseudo-differential calculus is termed the \emph{$p$-adic Fuchs calculus}
because it imitates Unterberger's  Fuchs calculus \cite{Un84} in the non-Archimedean setting.

\subsection{Setting and tools.}

Let ${\bf k}$ be a non-Archimedean local field. In characteristic zero, $\k$ is isomorphic to   a finite 
degree extension of the field
of $p$-adic numbers $\Q_p$. In positive characteristic, $\k$ is isomorphic to
a field of Laurent 
series $\mathbb F_\ell((X))$ with coefficients in a finite field $\mathbb F_\ell$.
We let $|.|_{\bf k}$ be the ultrametric absolute value,   $\CO_{\bf k}$ be the ring of integers of $\k$ (the unit ball), $\varpi$  be a 
generator of its unique maximal ideal and $\CO_{\k}^{\times}$ be 
the multiplicative group of units (the unit sphere). 
We let $p$ be the characteristic of the residue field $\CO_{\k}/\varpi\CO_{\k}$ and $q=p^f$  its cardinality.
The uniformizer $\varpi$ then satisfies $|\varpi|_{\k} = q^{-1}$. 
For every  integer $n \geq 1$, we denote by $U_n:=1+\varpi^n\CO_{\k}\subset \CO_{\k}^{\times}$ the   group of higher principal units.
Note that $U_n$ is  an open and  compact  subgroup of $\k^\times$ and it
 acts by dilations on $\k$. We can therefore  consider the  semidirect product $G_n:=U_n  \ltimes  \k$, that we
  view as a  subgroup of  the affine group $\k^\times\ltimes\k$, 
   with group law
  \begin{equation}
\label{grouplaw}  (x,t).(x',t')  := (xx', x'^{-1}t+t')\,,\quad x,x'\in\k^\times\,,\;t,t'\in\k.
 \end{equation}
 
 The group $G_n$ will be the \emph{covariance group} of our pseudo-differential calculus.
The subgroup $U_n$  will play the role of the \emph{configuration space} and the role
of the \emph{phase space} will be played by  the 
 homogeneous space $X_n :=  G_n/H_n$, where $H_n$ is the closed subgroup 
 $  \{1\} \times \varpi^{-n}\CO_{\k}\subset G_n$. Since $H_n$ is normal  in $G_n$,   
the quotient $X_n$ is  endowed with a group structure. \emph{This is on the group $X_n$ that we will construct a unitary dual $2$-cocycle}. 
Of course, $X_n$ is  isomorphic 
to the semidirect product  $U_n \ltimes \Gamma_n$,
where $\Gamma_n := \k  /{\varpi^{-n}\CO_{\k}}$ is discrete and Abelian.
As a locally compact  group, our phase space  $X_n$ is not compact, nor 
discrete and non-Abelian. We will denote the elements of $X_n$
by pairs $(a,[t])$, where $a \in U_n$ and $[t]:=t+\varpi^{-n}\CO_\k\in\Gamma_n$,
 so that the group law of $X_n$ is given by
$$
(a,[t]).(a',[t']) \; := \; (aa', [a'^{-1}t+t']).
$$
To simplify the notations, we will frequently denote an element of $G_n$ by $g:=(a,t)$ and an element
of the quotient group $X_n$ by $[g]:=(a,[t])$.  
We  normalize the Haar measure $dt$ of $\k$
so that ${\rm Vol}(\CO_\k)=1$.  Since $U_n$ is open in $\k^\times$, the Haar measure of $U_n$ is given by  the restriction of the Haar measure of
$\k^\times$ (which is $|t|_\k^{-1}dt$) but since $U_n\subset \CO_k^\times$, it is just $dt$.
 The $G_n$-invariant measure of  $X_n$
(which is also a Haar measure for the quotient group structure),
denoted by $d[g]$, is chosen to be the product of the Haar measure 
of $U_n$ by the counting measure on $\Gamma_n$.
In particular,  both groups $G_n$ and $X_n$ are unimodular. 
 Once for all, we fix  a unitary character $\Psi$ of the additive group $\k$ 
which is required to be trivial on 
$\CO_\k$ but not on $\varpi^{-1}\CO_\k$. 
We also deduce by \cite[Theorem 23.25]{HR} that $\widehat \Gamma_n $ is isomorphic to the additive group 
$\varpi^n\CO_\k$.

Important ingredients of the $p$-adic Fuchs calculus, are the square function 
$$
\sigma:U_n\to U_n\,,\quad
a\mapsto a^2,
$$ 
and the hyperbolic sine type function
$$
\phi:U_n\to \varpi^n\CO_\k\,,\quad a \mapsto a-a^{-1}.
$$
The crucial requirement for the $p$-adic Fuchs calculus to behave well, is that  the mappings $\sigma$ and 
$\phi$ are $C^1$-homeomorphisms (in the precise sense of \cite[p.77]{Schikhof}).
 For this property to hold true, we need to assume that the element $2\in\k$ belongs to $\CO_\k^\times$. 
Equivalently (\cite[Proposition 2.1]{GJ2}), we need  to exclude two cases: when
 $\k$ is of characteristic two and when $\k$ is an extension of $\Q_2$. Then, $\sigma$ and $\phi$ become surjective isometries of $U_n$
 viewed as metric space for the induced ultra-metric of $\k$.
 We then  define the square root function $U_n\to U_n$, $a \mapsto a^{\frac12}$, as the 
reciprocal mapping of the square function $\sigma:U_n\to U_n$. It is then shown in \cite[Proposition 2.1]{GJ2} that  $|\sigma'|_\k=|\phi'|_\k=1$.

Combining \cite[p.287]{Schikhof} (which only treats the one-dimensional case) and \cite[p.49]{Volovich} (which only treats  the case of $\Q_p^d$), 
we deduce the following substitution formula:
\begin{equation*}
   \int_V   f(t)  \, dt = \int_U   f \circ \vf(t)  \, |{\rm Jac}_{\vf}(t)|_{\k} \, dt,
 \end{equation*}
where $U, \, V$ are compact open subsets of $\k^d$, $f : V \rightarrow \C$ is $L^1$ and 
$\vf : U \rightarrow V$ is a $C^1$-homeomorphism  such that 
${\rm Jac}_{\vf}(t) \neq 0$ for all $t \in U$. 
In particular, we will frequently use this formula   for $\vf=\sigma$ and $\vf=\phi$, which in these cases gives: 
\begin{equation}
\label{change}
 \int_{U_n}  \, f(\sigma(a))  \, da=  \int_{U_n} \,  f(a)  \, da
 \quad\mbox{and}\quad \int_{U_n}h(\phi(a))\,da=\int_{\varpi^n\CO_\k} h(x)\,dx.
 \end{equation}

With our  choice of normalization for the Haar measure of $\k$
and with the non-trivial  previously fixed standard character $\Psi$ (that is constant on $\CO_\k$ but not on $\varpi^{-1}\CO_\k$), the Fourier transform
$$
\big(\CF_\k\, f\big )(x):=\int_\k f(t)\,\Psi(xt)\,dt, \quad f\in L^1(\k)\cap L^2(\k),
$$
extends to a  unitary operator on $L^2(\k)$. Identifying a function $\tilde{f} \in L^1(\Gamma_n)$ with the function $f \in L^1(\k)$ 
invariant under  translations in $\varpi^{-n}\CO_{\k}$, we therefore have 
 \begin{equation} 
 \label{intg} 
 \sum_{[t] \in \Gamma_n}  \tilde{f}([t])    =     q^{-n} \, \int_{\k} f(t) \, dt.
\end{equation}
In particular, if $f \in L^1(\k)$ is invariant by translations in $\varpi^{-n}\CO_{\k}$ then $\CF_\k(f)$
is supported on $\varpi^n\CO_\k$ and using the identification $\widehat\Gamma_n\sim \varpi^n
\CO_\k$, we get
\begin{align}
\label{ident-fourier}
\CF_\k\, f=q^n\,\CF_{\Gamma_n}\tilde f,
\end{align}
where $$
\CF_{\Gamma_n}:L^2(\Gamma_n)\to L^2(\varpi^n\CO_\k),\quad
f\mapsto \Big[x\mapsto \sum_{[t]\in\Gamma_n} f([t]) \,\Psi(xt) \Big],
$$
is the Fourier transform on $\Gamma_n$.

We denote by $\CD(\k)$ the space of Bruhat-test functions 
  on $\k$ \cite{Bruhat}. Since $\k$ is totally disconnected, $\CD(\k)$ coincides with the space of locally
   constant compactly supported functions on $\k$. 
Recall also that the Fourier transform $\CF_\k$ is a homeomorphism of $\CD(\k)$.
   Similarly, we let $\CD(X_n)$  be
   the space of Bruhat-test functions $X_n$, that is 
 the space of functions on $X_n$ which are locally
  constant in the continuous variable $a \in U_n$ and have finite support in the discrete variable $[t]\in \Gamma_n$.
  
  \subsection{The $p$-adic Fuchs calculus.}
  
    With $\Psi$ the fixed unitary character of $({\bf  k},+)$, 
we  define the  following representation  $\pi$ of the covariance group $G_n$ on the Hilbert space $L^2(U_n)$ of square integrable functions on the configuration
space:
\begin{equation}
\label{Utheta}
\pi(a,t)\varphi(a_0) :=\Psi( a_0^{-1}at) \,
\varphi\big(a^{-1}a_0\big).
 \end{equation}
From  Mackey theory \cite{Mackey}, 
any infinite dimensional unitary irreducible representation of $G_n$ is  unitarily equivalent to a representation of this form
(and they are classified by the orbits of the dual action of $U_n$ on $\widehat\k\simeq\k$).
Let then $\Sigma$ be the group-inversion operator on $L^2(U_n)$ (i.e$.$ the antipode):
$$
\Sigma\,\vf(a):=\vf(a^{-1}).
$$
Note that $\Sigma$ is bounded since $U_n$ is unimodular. (More precisely, $\Sigma$ is unitary and self-adjoint, that is a symmetry.)
Consider then  the map 
$$
G_n \rightarrow \CB(L^2(U_n)), \, \quad g \mapsto \pi(g)\,\Sigma \,\pi^*(g).
$$
Since $\Psi|_{\CO_\k}=1$, we observe that the above map is invariant under right-translations in $H_n$. Therefore, it defines a map 
$$
\Omega: X_n=G_n/H_n \rightarrow \CB(L^2(U_n)),
$$
explicitly given by 
$$
\Omega([g]) \vf (a_0) = \Psi \big (\phi(aa_0^{-1})t \big ) \, \vf(a^2a_0^{-1}).
$$
The quantization map underlying the $p$-adic Fuchs calculus is then defined by 
$$
{\bf \Omega} : L^1(X_n) \rightarrow \CB(L^2(U_n)),\quad
f\mapsto q^n \int_{X_n} f([g]) \, \Omega([g]) \, d[g].
$$
By construction, the quantization map is $G_n$-covariant:
 \begin{equation}
  \label{equiv}
   \pi(g) \, {\bf \Omega} (f) \, \pi^*(g) = {\bf \Omega}(\lambda_{[g]}f).
     \end{equation}

\begin{rmk}{\rm
Unterberger's Fuchs calculus is defined in a similar way, with  the multiplicative group $\R^*_+$  replacing $U_n$ and with (the connected component of)
the affine group $\R^*_+\ltimes\R$ replacing both $G_n$ and  $X_n$. The common point of the two situations, is that $\R^*_+$ and $U_n$ are the largest open subgroups of 
$\R^*$ and of $\k^\times$ on which the square function is a homeomorphism.}
\end{rmk}

 \subsection{The $\star$-product and the dual $2$-cocycle.} At this stage, the most important property of the quantization map  (property requiring that $\sigma$ and $\phi$ are homeomorphisms of class $C^1$),
  is that $\bO$ extends
(from $L^1(X_n)\cap L^2(X_n)$) to a surjective isometry from $L^2(X_n)$ to the Hilbert space of Hilbert-Schmidt operators on $L^2(U_n)$
(see \cite[Proposition 3.9]{GJ2}).
We can therefore transport the associative algebra structure of Hilbert-Schmidt operators to
 $L^2(X_n)$:
$$
\star :L^2(X_n)\times L^2(X_n)\to L^2(X_n)\,,\quad (f_1,f_2)\mapsto \bO^*\big(
\bO(f_1)\,\bO(f_2)\big).
$$
It is proven in \cite[Proposition 3.13]{GJ2} that for $ f_1,f_2\in L^2(X_n)\cap L^1(X_n)$, the product $f_1\star f_2\in L^2(X_n)\cap L^\infty(X_n)$  takes the form
$$
f_1\star f_2([g_0])=\int_{X_n \times X_n}K \big ([g_0],[g_1],[g_2] \big )\,f_1([g_1])\,f_2([g_2])\, d[g_1] \,d[g_2],
$$
 where $K$ is the locally constant and bounded function on $X_n^3$ given by:
 $$
 K \big ( (a_1,[t_1]),(a_2,[t_2]),(a_3,[t_3]) \big ) = q^{2 n } \, \Psi\big(
  \phi(a_1 a_2^{-1})t_3+ \phi( a_2 a_3^{-1})t_1+\phi( a_3 a_1^{-1})t_2\big).
 $$
 By construction, the kernel $K$ is invariant under the diagonal action of $G_n$ on $X_n^3$, thus invariant under 
 the diagonal left action of $X_n$ on $X_n^3$.  Hence (see \cite[Proposition 3.14]{GJ2}) we have
  for $ f_1,f_2\in L^2(X_n)\cap L^1(X_n)$ and with $\rho$ the right regular action:
  $$
f_1\star f_2=\int_{X_n \times X_n}K \big ([e],[g_1],[g_2] \big )\,\rho_{[g_1]}(f_1)\,\rho_{[g_2]}(f_2)\, d[g_1] \,d[g_2],
$$
with
$$
K \big ([e],[g_1],[g_2] \big )= q^{2n}\,\Psi\big(-\phi(a_1)t_2+\phi(a_2)t_1\big) .
$$
Note also that $\CD(X_n)$ is stable under $\star$ (see \cite[Proposition 3.15]{GJ2}).

As explained in the introduction, the natural candidate for a unitary dual  $2$-cocycle $F$ on the group $X_n$, is  
 given by the following convolution operator, initially defined as a quadratic form on $\CD(X_n\times X_n)$:
\begin{align}
\label{FF}
F:= q^{2n} \, \int_{X_n \times X_n}  \Psi\big(\phi(a_1)t_2-\phi(a_2)t_1\big)  \,\lambda_{[g_1]^{-1}}\otimes\lambda_{[g_2]^{-1}}\,
 d[g_1]\,d[g_2].
\end{align}
The goal of the next section is to prove that $F$ preserves the Bruhat space $\CD(X_n\times X_n)$ and extends to a unitary operator on $L^2(X_n)$. 
Once this will be proven, the associativity of $ \star$ on $\CD(X_n)$ will immediately imply  the $2$-cocycle relation for $F$:
$$
(1\otimes F)\big({\rm Id}\otimes \hat\Delta(F)\big)=( F\otimes1)\big(\hat\Delta(F)\otimes{\rm Id}\big),
$$
where $ \hat\Delta$ is the coproduct of the group von Neumann algebra $W^*(X_n)$, defined on the generators by 
$ \hat\Delta(\lambda_{[g]})=\lambda_{[g]}\otimes\lambda_{[g]}$.

\section{Unitarity of the dual $2$-cocycle}
 
We start with an important factorization property:
\begin{prop}
\label{P}
Let $\Xi : U_n \times \varpi^n\CO_{\k}\times U_n \times \varpi^n\CO_{\k} \rightarrow U_n \times \varpi^n\CO_{\k}\times U_n \times \varpi^n\CO_{\k}$ 
be the continuous map defined by: 
$$
\Xi \big (a_1 ,x_1 , a_2,x_2 \big ) := \big ( \phi^{-1}(a_2^{-1}x_2)a_1, x_1,  \phi^{-1}(-a_1^{-1}x_1)a_2, x_2 \big ),
$$
and let $T_\Xi$ be the continuous operator on $\CD(U_n \times \varpi^n\CO_{\k}\times U_n \times \varpi^n\CO_{\k})$ given by:
$$
T_\Xi f=f\circ \Xi.
$$
Then, the dual $2$-cocycle $F$ factorizes as:
$$
F = \big ( {\rm Id}\otimes \CF^{-1}_{\Gamma_n} \otimes {\rm Id}\otimes \CF^{-1}_{\Gamma_n} \big ) \, T_\Xi \, \big ({\rm Id}\otimes \CF_{\Gamma_n} \otimes {\rm Id}\otimes \CF_{\Gamma_n} \big ).
$$
\end{prop}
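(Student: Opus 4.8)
\section*{Proof sketch}

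The plan is to establish the equivalent operator identity $\CG\, F = T_\Xi\,\CG$ on the Bruhat space, where I abbreviate $\CG := {\rm Id}\otimes \CF_{\Gamma_n}\otimes{\rm Id}\otimes\CF_{\Gamma_n}$, so that $\CG^{-1} = {\rm Id}\otimes \CF^{-1}_{\Gamma_n}\otimes{\rm Id}\otimes\CF^{-1}_{\Gamma_n}$ and the asserted formula is just $F=\CG^{-1}T_\Xi\,\CG$. Everything is tested on a function $\xi\in\CD(X_n\times X_n)$, which has finite support in the two discrete variables; consequently every $\Gamma_n$-summation inherited directly from $\xi$ is finite and causes no convergence concern.

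First I would unfold $F$. Using $\lambda_{[g]^{-1}}(f)([h]) = f([g][h])$ and the group law $(a,[t])(b,[s]) = (ab,[b^{-1}t+s])$, formula \eqref{FF} gives, for $[h_i]=(b_i,[s_i])$,
\begin{equation*}
(F\xi)(b_1,[s_1],b_2,[s_2]) = q^{2n}\sum_{[t_1],[t_2]}\int_{U_n^2}\Psi\big(\phi(a_1)t_2-\phi(a_2)t_1\big)\,\xi\big(a_1b_1,[b_1^{-1}t_1+s_1],a_2b_2,[b_2^{-1}t_2+s_2]\big)\,da_1\,da_2.
\end{equation*}
Applying $\CG$ amounts to summing this against $\Psi(x_1s_1+x_2s_2)$ over $[s_1],[s_2]\in\Gamma_n$. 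I would then pass to the new discrete variables $[u_i] := [b_i^{-1}t_i+s_i]$ (a bijection of $\Gamma_n$ for each fixed $[t_i]$), which factors the exponential as $\Psi(x_1u_1+x_2u_2)\,\Psi(-x_1b_1^{-1}t_1-x_2b_2^{-1}t_2)$ and collects the entire $[t_1],[t_2]$-dependence into the two characters $\Psi\big((-\phi(a_2)-x_1b_1^{-1})t_1\big)$ and $\Psi\big((\phi(a_1)-x_2b_2^{-1})t_2\big)$, whose arguments indeed lie in $\varpi^n\CO_\k\simeq\widehat\Gamma_n$ since $\phi$ takes values in $\varpi^n\CO_\k$.

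The heart of the computation is the evaluation of these inner character sums. Combining each with the corresponding $U_n$-integral and substituting $a_i\mapsto\phi(a_i)$ — which by \eqref{change} turns $da_i$ into integration over $\varpi^n\CO_\k$ with unit Jacobian, using that $\phi$ is a $C^1$-homeomorphism with $|\phi'|_\k=1$ — the pair (sum over $[t_i]$, integral over $\varpi^n\CO_\k$) becomes precisely an instance of $\CF_{\Gamma_n}\CF^{-1}_{\Gamma_n}={\rm Id}$. Each such evaluation localizes an integration variable, at $a_2=\phi^{-1}(-x_1b_1^{-1})$ and $a_1=\phi^{-1}(x_2b_2^{-1})$ respectively, and produces a factor $q^{-n}$ carrying the normalization ${\rm Vol}(\varpi^n\CO_\k)=q^{-n}$. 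The two factors $q^{-n}$ absorb the prefactor $q^{2n}$, leaving $\CG(F\xi)(b_1,x_1,b_2,x_2)=\sum_{[u_1],[u_2]}\Psi(x_1u_1+x_2u_2)\,\xi\big(\phi^{-1}(x_2b_2^{-1})b_1,[u_1],\phi^{-1}(-x_1b_1^{-1})b_2,[u_2]\big)$. Recognizing the right-hand side, via commutativity $x_ib_i^{-1}=b_i^{-1}x_i$ in $\k$, as $(\CG\xi)\circ\Xi=T_\Xi(\CG\xi)$ evaluated at $(b_1,x_1,b_2,x_2)$ closes the identity.

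The step I expect to demand the most care is the evaluation of the inner character sums over the infinite discrete group $\Gamma_n$. Rather than invoke a Dirac mass heuristically, the clean route is to carry out the substitution $a_i\mapsto\phi(a_i)$ \emph{first}, after which the combined $[t_i]$-summation and $\varpi^n\CO_\k$-integration is literally the Fourier inversion $\CF_{\Gamma_n}\CF^{-1}_{\Gamma_n}={\rm Id}$; this sidesteps any interchange-of-limits issue, since one integrates a Bruhat function over $\varpi^n\CO_\k$ before resumming. The only genuine bookkeeping is the tracking of the normalization constant ${\rm Vol}(\varpi^n\CO_\k)=q^{-n}$ so that the powers of $q$ cancel exactly. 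Finally I would note that $\Xi$, being built from the homeomorphism $\phi^{-1}$ and the continuous group operations, is itself a homeomorphism, so $T_\Xi$ preserves $\CD(X_n\times X_n)$ and the factorization is an identity of continuous operators on the Bruhat space.
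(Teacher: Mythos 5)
Your computational skeleton is sound and lands on the correct answer: the localization at $a_1=\phi^{-1}(b_2^{-1}x_2)$, $a_2=\phi^{-1}(-b_1^{-1}x_1)$, with each pairing contributing $q^{-n}$ so that the prefactor $q^{2n}$ cancels, matches the paper exactly, and you use the same ingredients — unfolding \eqref{FF} via the group law, a change of variables in the discrete sums, the substitution formula \eqref{change}, and recognition of the partial Fourier transforms. The structural difference is the direction. Writing $\CG:={\rm Id}\otimes\CF_{\Gamma_n}\otimes{\rm Id}\otimes\CF_{\Gamma_n}$, the paper proves $F=\CG^{-1}T_\Xi\,\CG$ by evaluating $Ff([g_1'],[g_2'])$ directly: after the change of variables $[t_i]\mapsto[a_i'(t_i-t_i')]$, the discrete sums are \emph{finite} sums (inherited from the finite discrete support of $f$) and are literally $\CF_{\Gamma_n}$ applied to $f$; the $\phi$-substitution then produces the two inverse transforms together with the factor $q^{2n}$. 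In that route no interchange of limits ever occurs.

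Your route, proving $\CG F = T_\Xi\,\CG$ instead, has a genuine gap at its first step. To apply $\CG$ to $F\xi$ you must sum $\Psi(x_1s_1+x_2s_2)\,(F\xi)(b_1,[s_1],b_2,[s_2])$ over $([s_1],[s_2])\in\Gamma_n^2$; but a priori $F\xi$ is only a \emph{bounded} function of the discrete variables — the fact that $F\xi\in\CD(X_n\times X_n)$ is precisely the Corollary that the paper deduces \emph{from} this Proposition, so it cannot be assumed. (Your opening remark that every $\Gamma_n$-summation ``inherited directly from $\xi$'' is finite does not cover this sum: it is inherited from $\CG$, not from $\xi$.) Moreover, the relabelling $[u_i]=[b_i^{-1}t_i+s_i]$ entangles the outer $[s_i]$-sums with the inner $[t_i]$-sums, after which the entire $[t_i]$-dependence sits in pure characters with no decay; reorganizing into ``finite $[u]$-sum outside, then each $[t_i]$-sum paired with a $y_j$-integral, integral innermost'' is an interchange of non-absolutely convergent iterated sums. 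Your Fourier-inversion device correctly evaluates each pairing \emph{once you are in that configuration}, but it does not justify getting there, so the claim that it ``sidesteps any interchange-of-limits issue'' is inaccurate. The repair is standard and of the same flavor: since $(y_1,y_2)\mapsto\xi\big(\phi^{-1}(y_1)b_1,[u_1],\phi^{-1}(y_2)b_2,[u_2]\big)$ is locally constant on $(\varpi^n\CO_{\k})^2$, the oscillatory integrals vanish for all but finitely many $([t_1],[t_2])$, hence the whole quadruple sum has finitely many nonzero terms and every reordering is legitimate — but this argument must be supplied, and once it is, you are essentially running the paper's own proof. A final minor point: your closing assertion that $\Xi$ is a homeomorphism because it is built from $\phi^{-1}$ and continuous group operations is unfounded — bijectivity of $\Xi$ requires solving a coupled quadratic system and is the content of Lemma \ref{lemme} — but it is also not needed for this Proposition, since continuity of $\Xi$ already guarantees that $T_\Xi$ preserves locally constant functions.
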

\begin{proof}
Take $f \in \CD(X_n\times X_n)$ and let $[g_j']=(a_j',[t_j']) \in X_n$, $j=1,2$. Then we have:
\begin{align*}
 & F f ([g_1'],[g_2']) \\ & = q^{2n} \, \int_{X_n \times X_n} \Psi(-\phi(a_2)t_1) \, \Psi(\phi(a_1)t_2) \, f \big ( (a_1,[t_1]).(a_1',[t_1']) ; (a_2,[t_2]).(a_2',[t_2']) \big ) \, d[g_1]d[g_2]
 \\ & = q^{2n} \, \int_{X_n \times X_n} \Psi(-\phi(a_2)t_1) \, \Psi(\phi(a_1)t_2) \, f_1 \big ( a_1 a_1',[a_1'^{-1}t_1+t_1'] ;a_2 a_2',[a_2'^{-1}t_2+t_2'] \big ) \, d[g_1]d[g_2]. 
\end{align*}
The change of variables $[t_1] \mapsto [a_1'(t_1-t_1')]$ and $[t_2] \mapsto [a_2'(t_2-t_2')]$ leads to:
\begin{align*}
F f ([g_1'],[g_2'])& = q^{2n} \, \int_{U_n \times U_n}  \Psi(\phi(a_2)a_1't_1') \, \Psi(-\phi(a_1)a_2't_2') \\ &
\quad \times \big ({\rm Id}\otimes \CF_{\Gamma_n} \otimes {\rm Id}\otimes \CF_{\Gamma_n} \big )f
 \big ( a_1 a_1',-a_1'\phi(a_2); a_2 a_2', a_2'\phi(a_1) \big ) \, da_1da_2.
\end{align*}
The second formula in \eqref{change} then gives:
\begin{align*}
&F  f ([g_1'],[g_2'])=
 q^{2n} \, \int_{\varpi^n\CO_{\k} \times \varpi^n\CO_{\k}}  \overline{\Psi}(x_1t_1') \, \overline{\Psi}( x_2 t_2') \\ & \qquad\qquad \times 
 \big ({\rm Id}\otimes \CF_{\Gamma_n} \otimes {\rm Id}\otimes \CF_{\Gamma_n} \big ) f \big ( \phi^{-1}(a_2'^{-1}x_2)a_1', x_1;  \phi^{-1}(-a_1'^{-1}x_1)a_2', x_2 \big ) \, dx_1dx_2,
\end{align*}
which is the announced formula.
\end{proof}
Since the operator of composition by a continuous function preserves the space of locally constant functions, we deduce:
\begin{corr}
The dual $2$-cocycle $F$ is continuous on the Bruhat space $\CD(X_n\times X_n)$. In particular,  $F$ also makes sense as a densely defined
operator affiliated with the group von Neumann algebra $W^*(X_n\times X_n)$.
\end{corr}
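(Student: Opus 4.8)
The plan is to deduce the Corollary directly from the factorization $F = A^{-1}\,T_\Xi\,A$ provided by \pref{P}, where I abbreviate $A := {\rm Id}\otimes \CF_{\Gamma_n}\otimes{\rm Id}\otimes\CF_{\Gamma_n}$. It then suffices to check that each of the three factors preserves the appropriate Bruhat space, and to keep track of how these spaces transform.

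First I would observe that the partial Fourier transform $\CF_{\Gamma_n}$ restricts to a linear isomorphism from the finite-support functions on the discrete group $\Gamma_n$ onto the locally constant functions on its compact dual $\varpi^n\CO_\k$. In one direction this is immediate, since $\CF_{\Gamma_n}$ of a finitely supported function is a finite sum of the characters $x\mapsto\Psi(xt)$, each of which is locally constant on $\varpi^n\CO_\k$; in the other direction a locally constant function on the compact set $\varpi^n\CO_\k$ factors through a finite quotient $\varpi^n\CO_\k/\varpi^m\CO_\k$ and hence has finitely supported inverse transform. (This is simply the avatar on $\Gamma_n$ of the homeomorphism property of $\CF_\k$ on $\CD(\k)$ recalled above, via \eqref{ident-fourier}.) Tensoring with the identity in the two $U_n$-variables, on which local constancy is untouched, shows that $A$ is a bijection from $\CD(X_n\times X_n)$ onto $\CD(U_n\times\varpi^n\CO_\k\times U_n\times\varpi^n\CO_\k)$, with inverse $A^{-1}$. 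Note that, as $U_n$ and $\varpi^n\CO_\k$ are both compact, this target Bruhat space is just the space of locally constant functions on a compact set, with no support condition left to monitor.

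Next I would handle the middle factor $T_\Xi$. Since $\Xi$ is continuous — its ingredients $\phi^{-1}$, together with multiplication and inversion in $U_n$, are all continuous — and since a locally constant function is precisely one that is continuous into the discrete-topology target, the composite $f\circ\Xi$ is locally constant whenever $f$ is. Hence $T_\Xi$ preserves $\CD(U_n\times\varpi^n\CO_\k\times U_n\times\varpi^n\CO_\k)$. Composing the three maps gives that $F=A^{-1}\,T_\Xi\,A$ carries $\CD(X_n\times X_n)$ into itself, and continuity for the inductive-limit topology of the Bruhat space follows since each factor is continuous. This establishes the first assertion.

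It remains to address affiliation. Since $\CD(X_n\times X_n)$ is dense in $L^2(X_n\times X_n)$, the first part shows $F$ is densely defined on this invariant core. As $F$ is assembled out of the operators $\lambda_{[g_1]^{-1}}\otimes\lambda_{[g_2]^{-1}}\in W^*(X_n\times X_n)$, it commutes — on the core $\CD(X_n\times X_n)$, which is also preserved by the right translations — with the generators $\rho_{[h_1]}\otimes\rho_{[h_2]}$ of the commutant $W^*(X_n\times X_n)'$, because left and right regular representations commute on each leg. Closability of $F$ follows from the existence of the densely defined formal adjoint obtained by conjugating the kernel $\Psi$ to $\overline\Psi$ and swapping the two legs; its closure then commutes with the commutant and is therefore affiliated with $W^*(X_n\times X_n)$. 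I expect no genuine obstacle in the Corollary itself, the whole difficulty having been absorbed into \pref{P}; the only step requiring a little care is the standard finite-support/locally-constant correspondence under $\CF_{\Gamma_n}$.
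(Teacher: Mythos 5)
Your proof is correct and follows exactly the paper's (very terse) argument: the paper deduces the Corollary from Proposition \ref{P} by the single observation that composition with a continuous map preserves locally constant functions, with the Fourier-transform correspondence between finitely supported functions on $\Gamma_n$ and locally constant functions on $\varpi^n\CO_\k$, and the affiliation statement, left implicit. You have merely filled in those implicit details (your description of the formal adjoint as a ``leg swap'' is slightly imprecise --- inverting the group variables also produces factors $a_i$ in the kernel --- but this does not affect the closability argument).
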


Next, we come to our main technical result:

\begin{lem}
\label{lemme}
The map $\Xi $ defined in Proposition \ref{P} is a homeomorphism of class $C^1$   with
$|{\rm Jac}_{ \Xi} |_\k=1$.
\end{lem}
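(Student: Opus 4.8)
The plan is to treat $\Xi$ as a deformation, parametrized by the ``small'' variables $(x_1,x_2)\in(\varpi^n\CO_\k)^2$, of the identity in the ``large'' variables $(a_1,a_2)\in U_n^2$, the coordinates $x_1,x_2$ being left untouched by $\Xi$. That $\Xi$ is of class $C^1$ is immediate: it is built by composition, multiplication and inversion out of the map $\phi^{-1}$, which is a $C^1$-homeomorphism $\varpi^n\CO_\k\to U_n$ by hypothesis, and out of the group operations of $U_n$, all of which are $C^1$ on the relevant compact domains. The two genuinely non-trivial points are the global bijectivity (with continuous, $C^1$ inverse) and the value of the Jacobian; I expect the bijectivity to be the main obstacle, and I would settle it by a contraction argument exploiting the ultrametric.

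For the bijectivity, I would fix a target $(y_1,x_1,y_2,x_2)$ with $y_1,y_2\in U_n$ and solve $\Xi(a_1,x_1,a_2,x_2)=(y_1,x_1,y_2,x_2)$. Since $x_1,x_2$ are preserved, this amounts to the coupled system
\begin{equation*}
a_1=\phi^{-1}(a_2^{-1}x_2)^{-1}\,y_1,\qquad a_2=\phi^{-1}(-a_1^{-1}x_1)^{-1}\,y_2
\end{equation*}
for $(a_1,a_2)\in U_n\times U_n$. I would show that the right-hand side defines a self-map $G$ of the complete (indeed compact) metric space $U_n\times U_n$ which is a contraction: using that $\phi^{-1}$ and inversion are isometries of $U_n$ and that $|y_i|_\k=1$, the $i$-th component of $G$ changes by $|x_j|_\k\,|a_j-a_j'|_\k\le q^{-n}|a_j-a_j'|_\k$ (with $j\ne i$), so $G$ contracts distances by the factor $q^{-n}<1$. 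By the Banach fixed point theorem $G$ has a unique fixed point, which gives both existence and uniqueness of the preimage; hence $\Xi$ is a bijection. Continuity of $\Xi^{-1}$ is then automatic since $U_n\times\varpi^n\CO_\k\times U_n\times\varpi^n\CO_\k$ is compact.

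Finally, the Jacobian. Because $\Xi$ fixes the coordinates $x_1,x_2$, its derivative matrix is block-triangular with an identity block, so ${\rm Jac}_\Xi$ equals the determinant of the $2\times2$ block $A=\partial(y_1,y_2)/\partial(a_1,a_2)$, where $y_1=\phi^{-1}(a_2^{-1}x_2)a_1$ and $y_2=\phi^{-1}(-a_1^{-1}x_1)a_2$ denote the first and third output coordinates. The diagonal entries of $A$ are $\phi^{-1}(a_2^{-1}x_2)$ and $\phi^{-1}(-a_1^{-1}x_1)$, both units of $\CO_\k$, so the diagonal product has absolute value $1$; each off-diagonal entry, obtained by differentiating $\phi^{-1}(a_2^{-1}x_2)$ in $a_2$ (resp. $\phi^{-1}(-a_1^{-1}x_1)$ in $a_1$), carries a factor $x_2$ (resp. $x_1$) together with terms of absolute value $1$ (here I use $|(\phi^{-1})'|_\k=1$, which follows from $|\phi'|_\k=1$ and $a_i\in U_n$). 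Thus the anti-diagonal product has absolute value $\le q^{-2n}<1$, and the ultrametric inequality forces $|\det A|_\k=1$, whence $|{\rm Jac}_\Xi|_\k=1$. The invertibility of the derivative at every point (its determinant being a unit) moreover lets me invoke the non-Archimedean inverse function theorem to conclude that $\Xi^{-1}$ is itself of class $C^1$, completing the proof that $\Xi$ is a $C^1$-homeomorphism.
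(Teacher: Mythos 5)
Your proof is correct, but the heart of it---bijectivity---is argued quite differently from the paper. The paper solves the system $\Xi(a_1,x_1',a_2,x_2')=(a_1',x_1',a_2',x_2')$ \emph{explicitly}: it reduces it to a pair of coupled quadratic equations in homogeneous variables, analyzes the discriminants (using that $2\in\CO_\k^\times$ and that elements of $U_n$ have unique square roots in $U_n$), rules out the inadmissible root by a separate argument, and ends with closed-form expressions for $a_1,a_2$ in terms of the target; these explicit formulas are then what certify that $\Xi^{-1}$ is continuous and of class $C^1$. You instead observe that the preimage equation is a fixed-point equation for the self-map $G(a_1,a_2)=\big(\phi^{-1}(a_2^{-1}x_2)^{-1}y_1,\,\phi^{-1}(-a_1^{-1}x_1)^{-1}y_2\big)$ of $U_n\times U_n$, and that $G$ is a $q^{-n}$-contraction because $\phi^{-1}$ and inversion are isometries of the relevant ultrametric spaces and $|x_j|_\k\le q^{-n}$; Banach's fixed point theorem then gives existence and uniqueness of preimages at once, compactness gives continuity of $\Xi^{-1}$, and the non-Archimedean inverse function theorem (valid here because $C^1$ in Schikhof's sense is strict differentiability, and your Jacobian is a unit everywhere) gives that $\Xi^{-1}$ is $C^1$. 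Your route is shorter and avoids the delicate root-selection step, which is the trickiest point of the paper's proof; what it gives up is the explicit formula for $\Xi^{-1}$, and it must import the multivariable inverse function theorem as an external ingredient, whereas the paper's computation is self-contained. The Jacobian estimate (unit diagonal product plus an anti-diagonal term of absolute value at most $q^{-2n}$, then the ultrametric inequality) is essentially identical in both proofs.
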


\begin{proof}
The main difficulty is to  show that $\Xi$ is bijective. Due to the specific form of the map $\Xi$, it is equivalent  to prove that for any 
$(a_1',x_1',a_2',x_2')\in U_n \times \varpi^n\CO_{\k}\times U_n \times \varpi^n\CO_{\k}$, there exists a unique  
$(a_1,a_2)\in U_n \times U_n$  such that
\begin{align}
\label{XI}
\Xi \big (a_1,x_1',a_2,x_2' \big ) =   \big (a_1',x_1',a_2',x_2' \big ).
\end{align}
The relation \eqref{XI} give the (equivalent)  systems of equations:
\begin{equation*}
\begin{cases}
    \phi^{-1}(a_2^{-1}x_2')a_1 = a_1'    \\
    \phi^{-1}(-a_1^{-1}x_1')a_2 = a_2'  
\end{cases}
\hspace{-0.1cm}
\Leftrightarrow \,\,
\begin{cases}
     a_2^{-1}x'_2 =  \phi(a_1'a_1^{-1})    \\
     -a_1^{-1}x'_1=   \phi(a_2'a_2^{-1})  
\end{cases}
\hspace{-0.1cm}
\Leftrightarrow \,\,
\begin{cases}
a_1'^{-1}a_1^2+a_2^{-1}x_2' a_1-a_1'=0\\
a_2'^{-1}a_2^2-a_1^{-1}x_1' a_2-a_2'=0
\end{cases}\hspace{-0.4cm}.
\end{equation*}
Note first that this system is invariant under the dilations:
$$
(a_1,a_2,;a'_1,a'_2,x'_1,x'_2)\mapsto (\alpha_1a_1,\alpha_2a_2,;\alpha_1a'_1,\alpha_2a'_2,\alpha_1x'_1,\alpha_2x'_2),\quad
\alpha_1,\alpha_2\in U_n.
$$
Hence,  we get the equivalent system
of coupled quadratic equations in term of the homogeneous variables $u_j:=a_j'^{-1}a_j\in U_n$ and  parameters $X_j:=a_j'^{-1}x_j\in\varpi^n\CO_k$, $j=1,2$:
\begin{equation}
\label{Sy}
\begin{cases}
u_1^2+X_2\,u_2^{-1}\,u_1-1=0\\
u_2^2-X_1\,u_1^{-1} \,u_2-1=0
\end{cases}\hspace{-0.4cm}.
\end{equation}
Note also that this system
is invariant under the transformation  
\begin{align}
\label{inv}
(u_1,u_2,X_1,X_2)\mapsto(u_2,u_1,-X_2,-X_1).
\end{align}

The discriminant of the first equation in \eqref{Sy} (treating $u_2\in U_n$ as a parameter) is given by
$$
\Delta_1 :=4+ \frac{X_2^2}{u_2^{2}}=4\Big(1+ \frac{X_2^2}{4u_2^{2}}\Big).
$$
Since   $X_2\in \varpi^n\CO_\k$, $u_2\in U_n\subset \CO_\k^\times$ and since $4$ is a unit too, we have:
$$
1+ \frac{X_2^2}{4u_2^{2}}\in 1+ \varpi^{2n}\CO_\k\subset  1+ \varpi^{n}\CO_\k=U_n,
$$
which therefore possesses a (unique) square root  in $U_n$. Since $4$ possesses exactly two square roots in $\k$ (which are 
$\pm 2$), $\Delta_1$  also possesses exactly two square roots in $\k$. Hence, the first equation of the quadratic system 
possesses exactly two solutions in $\k$ namely: 
$$
Y_\pm=\pm   \Big (1+  \frac{X_2^2}{4u_2^{2}} \Big )^{\frac{1}{2}}-\frac{X_2}{2u_2}\in \pm U_n+ \varpi^{n}\CO_\k=\pm1+\varpi^{n}\CO_\k.
$$
The solution $Y_-$ is not admissible because it does not belong to $U_n$ as it should be. Hence, we get only one solution, namely
\begin{equation}
\label{eq2}
u_1 =  \Big (1+  \frac{X_2^2}{4u_2^{2}} \Big )^{\frac{1}{2}}-\frac{X_2}{2u_2} \in U_n .  
\end{equation}
Substituting this expression for $u_1$ in the second equation of the system \eqref{Sy} yields the relation: 
\begin{align*}
 X_1 = \Big ( \Big (1+  \frac{X_2^2}{4u_2^{2}} \Big )^{\frac{1}{2}}-\frac{X_2}{2u_2}\Big)(u_2-u_2^{-1} ),
\end{align*}
which is equivalent to:
\begin{equation}
\label{eq3}
 X_1 +\frac{X_2}{2u_2}(u_2-u_2^{-1} )= \Big (1+  \frac{X_2^2}{4u_2^{2}} \Big )^{\frac{1}{2}}(u_2-u_2^{-1} ).
 \end{equation}
Squaring the relation \eqref{eq3} and multiplying it by $u_2^2$  give,
in term of the variable  $U:=u_2^2\in U_n$,  the following quadratic equation:
\begin{align}
\label{X}
U^2 - \big (2+ X_1X_2 +X_1^2   \big ) U + \big ( 1+ X_1X_2 \big )    =   0. 
\end{align}
The discriminant of  equation \eqref{X} is:
\begin{align*}
 \Delta_2 = X_1^2 \big (  4 + (X_1 +X_2)^2 \big ).
\end{align*}
Since 
$$
1 + \Big(\frac{X_1}2 +\frac{X_2}2\Big)^2\in 1+ \varpi^{2n}\CO_\k \subset 1+ \varpi^{n}\CO_\k=U_n,
$$
 $ \Delta_2$ possesses exactly two square roots in $\k$, given by
$$
\pm2X_1 \Big(1 + \Big(\frac{X_1}2 +\frac{X_2}2\Big)^2\Big)^{\frac12}\in  \varpi^{n}\CO_\k,
$$  
and thus, the equation \eqref{X} possesses exactly two solutions:
\begin{equation}
\label{eq4}
U_\pm =1+ \frac{X_1X_2 }2+\frac{X_1^2}2\pm X_1 \Big(1 + \Big(\frac{X_1}2 +\frac{X_2}2\Big)^2\Big)^{\frac12}.
\end{equation}
We need to prove that only one solution is admissible and this not that easy since here both $U_+$ and $U_-$ belong to $U_n$. To this aim,
note first that we can assume without loss of generality that $X_1\ne 0$ (if $X_1=0$ there is no ambiguity).
We then evaluate the relation \eqref{eq3} in $U_\pm$  and multiply it by $U_\pm$ to get:
\begin{align*}
 X_1U_\pm=\Big(\Big(U_\pm+\frac{X_2^2}4\Big)^{\frac12}-\frac{X_2}2\Big)(U_\pm-1).
\end{align*}
Since
$$
U_\pm -1= \frac{X_1X_2 }2+\frac{X_1^2}2\pm X_1 \Big(1 + \Big(\frac{X_1}2 +\frac{X_2}2\Big)^2\Big)^{\frac12},
$$
we get, after simplification by $X_1\ne 0$, the condition:
$$
U_\pm=\Big(\Big(U_\pm+\frac{X_2^2}4\Big)^{\frac12}-\frac{X_2}2\Big)\Big(\frac{X_2 }2+\frac{X_1}2\pm  \Big(1 + \Big(\frac{X_1}2 +\frac{X_2}2\Big)^2\Big)^{\frac12}\Big).
$$
The first factor  in the above expression belongs to $U_n$ while the second factor 
 belongs to $\pm1+\varpi^n\CO_\k$.
  This condition  thus excludes $U_-$. Hence, $u_2^2=U_+$, which only gives  one solution since the square root function is bijective
 on $U_n$:
 $$
 u_2=\Big(1+ \frac{X_1X_2 }2+\frac{X_1^2}2+ X_1 \Big(1 + \Big(\frac{X_1}2 +\frac{X_2}2\Big)^2\Big)^{\frac12}\Big)^{\frac12}.
 $$
 By the symmetry \eqref{inv} of the system, we also get:
  $$
 u_1=\Big(1+ \frac{X_1X_2 }2+\frac{X_2^2}2- X_2 \Big(1 + \Big(\frac{X_1}2 +\frac{X_2}2\Big)^2\Big)^{\frac12}\Big)^{\frac12}.
 $$
  Finally, in term of the original variables and parameters, we get: 
\begin{align*}
a_2 &= a_2' \Big ( 1+ \frac{x_1'x_2'}{2a_1'a_2'} + \frac{x_1'^2}{2a_1'^{2}}  +  \frac{x_1'}{ a_1'} 
\Big (  1 + \Big( \frac{x_1'}{2a_1'}+\frac{x_2'}{2a_2'}\Big)^2 \Big )^{ \frac{1}{2}  } \Big )^{\frac{1}{2}},\\
a_1 &= a_1' \Big ( 1+ \frac{x_1'x_2'}{2a_1'a_2'} + \frac{x_2'^2}{2a_2'^{2}}  -  \frac{x_2'}{ a_2'} 
\Big (  1 + \Big( \frac{x_1'}{2a_1'}+\frac{x_2'}{2a_2'}\Big)^2 \Big )^{ \frac{1}{2}  } \Big )^{\frac{1}{2}}.
\end{align*}
This shows that $\Xi$ is one-to-one and onto and the explicit expressions above entail that it is moreover a homeomorphism
of class $C^1$. 

It remains to compute the absolute value of the Jacobian of $\Xi$.
Since for every $a \in U_n$, $\phi'(a) = 1+a^{-2}$, we have 
$$
(\phi^{-1})'(x) = \Big ( 1 + \big ( \phi^{-1}(x) \big )^{-2} \Big )^{-1}, \quad \forall x \in \varpi^n\CO_{\k}.
$$
As a consequence, we get 
\begin{align*}
  {\rm Jac}_{\Xi} (a_1,x_1,a_2,x_2)   &  =    \phi^{-1}\big (-a_1^{-1}x_1 \big ) \,  \phi^{-1}\big ( a_2^{-1}x_2 \big ) \\
  +&  a_1^{-1}a_2^{-1} \Big ( 1+ \big (  \phi^{-1}\big (-a_1^{-1}x_1 \big ) \big )^{-2}     \Big )^{-1} \Big ( 1+  \big (\phi^{-1}\big ( a_2^{-1}x_2 \big ) \big )^{-2}   \Big )^{-1} 
   x_1x_2.
\end{align*}
The first term clearly belongs  to $U_n$. For the second term, note first that
$$
1+ \big (  \phi^{-1}\big (\pm a_j^{-1}x_j \big ) \big )^{-2}   \in 1+U_n=2+\varpi^n\CO_\k\subset \CO_\k^\times.
$$
Hence in the second term, all the factors preceding  $ x_1x_2$  are units. Hence the second term belongs to $\varpi^{2n}\CO_\k$.
This proves that $ {\rm Jac}_{\Xi} $ takes values in $U_n+\varpi^{2n}\CO_\k=U_n$, from which the claim follows.
\end{proof}

An immediate consequence of Lemma \ref{lemme} is that the operator $T_\Xi$ is unitary on 
$L^2(U_n \times \varpi^n\CO_{\k}\times U_n \times \varpi^n\CO_{\k})$. Since the partial Fourier
transform $ {\rm Id}\otimes \CF_{\Gamma_n}$ is unitary from $L^2(X_n)$ to $L^2(U_n \times \varpi^n\CO_{\k})$, 
Proposition \ref{P} immediately implies:

\begin{thm}
\label{Th}
Let $\k$ be a local field which is not of characteristic $2$, nor an extension of $\Q_2$. Then,  
 the dual $2$-cocycle $F$ given in \eqref{FF} is a  unitary operator  on $L^2(X_n\times X_n)$.
\end{thm}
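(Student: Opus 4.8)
The plan is to read off unitarity of $F$ directly from the factorization established in Proposition~\ref{P}, by exhibiting $F$ as a composition of three unitary operators on $L^2(X_n\times X_n)$. Using the measure-space decomposition $X_n=U_n\times\Gamma_n$, with Haar measure the product of the Haar measure of $U_n$ and the counting measure of $\Gamma_n$, I identify $L^2(X_n\times X_n)$ with $L^2\big((U_n\times\Gamma_n)^2\big)$, so that the two outer factors in Proposition~\ref{P} act as partial Fourier transforms in the two $\Gamma_n$-variables only, and the inner factor $T_\Xi$ acts on the continuous model $L^2\big(U_n\times\varpi^n\CO_\k\times U_n\times\varpi^n\CO_\k\big)$.

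First I would record that $\CF_{\Gamma_n}\colon L^2(\Gamma_n)\to L^2(\varpi^n\CO_\k)$ is unitary for the normalizations of Haar measure and of the character $\Psi$ fixed in the preliminaries (this is the Plancherel identity for the dual pair $\Gamma_n\simeq\widehat{\varpi^n\CO_\k}$ recalled via \eqref{intg}--\eqref{ident-fourier}). Consequently the conjugating operator $\mathrm{Id}\otimes\CF_{\Gamma_n}\otimes\mathrm{Id}\otimes\CF_{\Gamma_n}$ is a unitary isomorphism from $L^2(X_n\times X_n)$ onto $L^2\big(U_n\times\varpi^n\CO_\k\times U_n\times\varpi^n\CO_\k\big)$, with inverse $\mathrm{Id}\otimes\CF^{-1}_{\Gamma_n}\otimes\mathrm{Id}\otimes\CF^{-1}_{\Gamma_n}$.

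The heart of the argument is that the middle operator $T_\Xi$ is unitary on $L^2\big(U_n\times\varpi^n\CO_\k\times U_n\times\varpi^n\CO_\k\big)$, and this I would deduce from Lemma~\ref{lemme}. The domain is a compact open subset of $\k^4$, so the substitution formula recalled in the preliminaries applies to $\Xi$; since $\Xi$ is a $C^1$-homeomorphism with $|{\rm Jac}_\Xi|_\k=1$, one gets for every Bruhat-test function $f$
\begin{equation*}
\int |f(t)|^2\,dt = \int |f(\Xi(t))|^2\,|{\rm Jac}_\Xi(t)|_\k\,dt = \int |f\circ\Xi(t)|^2\,dt,
\end{equation*}
so $T_\Xi$ is isometric on the dense subspace $\CD$ and extends to an isometry of $L^2$. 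As $\Xi$ is a bijection whose inverse is again a $C^1$-homeomorphism of Jacobian absolute value $1$, the operator $T_{\Xi^{-1}}$ is the inverse isometry, whence $T_\Xi$ is unitary. Composing the three unitaries then gives that $F$ is unitary, proving Theorem~\ref{Th}.

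The genuinely delicate point is not this composition but Lemma~\ref{lemme} itself, and specifically the bijectivity of $\Xi$: that reduces to solving the coupled quadratic system \eqref{Sy} over $\k$ and checking that exactly one root of each quadratic lies in the unit group $U_n$. The obstruction is that \emph{both} candidate roots $U_\pm$ in \eqref{eq4} a priori lie in $U_n$, so discarding the spurious solution is not automatic and demands the extra sign analysis separating the factor lying in $\pm1+\varpi^n\CO_\k$. Granting Lemma~\ref{lemme}, the unitarity of $F$ follows immediately from the chain of unitaries above.
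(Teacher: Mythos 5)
Your proposal is correct and takes essentially the same route as the paper: the paper proves Theorem~\ref{Th} in exactly this way, combining the factorization of Proposition~\ref{P} with the unitarity of the partial Fourier transforms $\mathrm{Id}\otimes\CF_{\Gamma_n}$ and the unitarity of $T_\Xi$ deduced from Lemma~\ref{lemme}. The only difference is presentational: you spell out the substitution-formula/density argument (and the fact that $T_{\Xi^{-1}}$ provides the inverse isometry) which the paper compresses into the phrase that unitarity of $T_\Xi$ is an ``immediate consequence'' of Lemma~\ref{lemme}.
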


\section*{Acknowledgments}
This work is part of my PhD thesis. I would like to warmly thank my advisor Victor Gayral, for his extreme patience as well as his unshakeable support.

\appendix
\section{Proof of $2$-cocyclicity}
Our goal here is to provide a direct proof  that the (unitary) convolution operator:
\begin{align*}
F= q^{2n} \, \int_{X_n \times X_n}  \Psi\big(\phi(a_1)t_2-\phi(a_2)t_1\big)  \,\lambda_{[g_1]^{-1}}\otimes\lambda_{[g_2]^{-1}}\,
 d[g_1]\,d[g_2]
\end{align*}
satisfies the $2$-cocycle relation:
$$
(1\otimes F)\big({\rm Id}\otimes \hat\Delta(F)\big)=( F\otimes1)\big(\hat\Delta(F)\otimes{\rm Id}\big).
$$

For convenience, we shall work with the adjoint of  $F$, given  on $\CD(X_n\times X_n)$ by:
\begin{align*}
F^*:= q^{2n} \, \int_{X_n ^2}  \overline{\Psi}\big(\phi(a_1)t_2-\phi(a_2)t_1\big)  \,\lambda_{[g_1]}\otimes\lambda_{[g_2]}\,
 d[g_1]\,d[g_2].
\end{align*}
In term of $F^*$,   the cocycle relation reads:
$$
\big({\rm Id}\otimes \hat\Delta(F^*)\big) (1\otimes F^*) =\big(\hat\Delta(F^*)\otimes{\rm Id}\big)( F^*\otimes1).
$$
By density of $\CD(X_n\times X_n\times X_n)$ in $L^2(X_n\times X_n\times X_n)$, it is enough to prove this relation on test  functions.
For $f \in \CD(X_n\times X_n\times X_n)$ and $[g],[g'],[g''] \in X_n$, we have:
\begin{align*} 
&\big({\rm Id}\otimes \hat\Delta(F^*)\big)f([g],[g'],[g'']) 
= q^{2n} \, \int_{X_n^2} \Psi(\phi(a_2)t_1) \, \overline{\Psi}(\phi(a_1)t_2) \\
&\quad\quad\qquad\qquad\qquad\times f \big (a a_1^{-1},[t- a_1a^{-1}t_1] ;  a'a_2^{-1},[t'- a_2 a'^{-1}t_2]; a'' a_2^{-1},[t''-a_2 a''^{-1}t_2] \big ) \, d[g_1] d[g_2],\\ 
&\big(\hat\Delta(F^*)\otimes{\rm Id}\big)f([g],[g'],[g'']) 
= q^{2n} \, \int_{X_n^2} \Psi(\phi(a_2)t_1) \, \overline{\Psi}(\phi(a_1)t_2) \\
&\quad\quad\qquad\qquad\qquad\times f \big (a a_1^{-1},[t- a_1a^{-1}t_1] ;  a'a_1^{-1},[t'- a_1 a'^{-1}t_1]; a'' a_2^{-1},[t''-a_2 a''^{-1}t_2] \big ) \, d[g_1] d[g_2]. 
\end{align*}
 
Since $f$ is compactly supported,   the  integrals above are, for fixed $[g],[g'],[g''] \in X_n$, absolutely convergent.
To simplify a little the notations, we shall let $\CF_{\Gamma_n}$ be the partial Fourier transform:
\begin{align*}
\CF_{\Gamma_n}&: L^2(U_n\times \Gamma_n)\to L^2(U_n\times \widehat\Gamma_n)\simeq L^2(U_n\times \varpi^{-n}\CO_\k).
 \end{align*}
The dilation $t_1 \mapsto -aa_1^{-1}t_1$ followed by the translation $t_1 \mapsto t_1 - t$ in the first expression and 
the dilation $t_2 \mapsto -a''a_2^{-1}t_2$ followed by the translation $t_2 \mapsto t_2 - t''$ in the second, entail:
\begin{align*}
&\big({\rm Id}\otimes \hat\Delta(F^*)\big)f([g],[g'],[g'']) 
=
q^{2n} \, \int_{U_n \times X_n}  \Psi(aa_1^{-1}\phi(a_2)t) \, \overline{\Psi}(\phi(a_1)t_2) \\ 
&\qquad\quad \times \big ( \CF_{\Gamma_n} \otimes 1 \otimes 1  \big ) f \big (aa_1^{-1}, - a a_1 ^{-1} \phi(a_2) ; a' a_2^{-1},[t'-a_2a'^{-1}t_2];a''a_2^{-1},[t''-a_2a''^{-1}t_2] \big ) \, da_1d[g_2],\\
&\big(\hat\Delta(F^*)\otimes{\rm Id}\big)f([g],[g'],[g'']) 
=
q^{2n} \, \int_{U_n \times X_n}  \overline{\Psi}(a''a_2^{-1} \phi(a_1)t'')\, \Psi(\phi(a_2)t_1)  \\ 
&\qquad\quad \times \big ( 1 \otimes 1 \otimes \CF_{\Gamma_n} \big ) f \big (a a_1^{-1},[t- a_1a^{-1}t_1]  ; a' a_1^{-1},[t'-a_1a'^{-1}t_1];a''a_2^{-1},a''a_2^{-1}\phi(a_1) \big ) \, da_2
d[g_1].
 \end{align*}

  For $a,a',a'',a_1,a_2\in U_n$ and $[t],[t'],[t'']\in\Gamma_n$ fixed, we shall consider the functions
\begin{align*}
 &\vf_{a,a',a'',a_1,a_2,[t'],[t'']}([t_2]):=\\
 & \qquad\quad\qquad\qquad\big ( \CF_{\Gamma_n} \otimes 1 \otimes 1  \big ) f \big ( a a_1^{-1}, - a a_1^{-1}\phi(a_2) ; a'a_2^{-1},[t'-a_2a'^{-1}t_2]; a'' a_2^{-1},[t''- a_2 a''^{-1}t_2] \big ),
 \\
 &\psi_{a,a',a'',a_1,a_2,[t],[t']}([t_1]):=\\
 & \qquad\quad\qquad\qquad\big ( 1 \otimes 1 \otimes \CF_{\Gamma_n} \big ) f \big (a a_1^{-1},[t- a_1a^{-1}t_1]  ; a' a_1^{-1},[t'-a_1a'^{-1}t_1];a''a_2^{-1},a''a_2^{-1}\phi(a_1) \big ).
 \end{align*}
In term of these functions, we have 
 \begin{align}
\label{form}
&\big({\rm Id}\otimes \hat\Delta(F^*)\big)f([g],[g'],[g'']) = q^{2n} \, \int_{U_n^2}  \Psi(aa_1^{-1}\phi(a_2) t) \,\big( \CF_{\Gamma_n}\vf_{a,a',a'',a_1,a_2,[t'],[t'']}\big)
(-\phi(a_1)) \, da_1da_2,
 \\
\label{formz}
&\big(\hat\Delta(F^*)\otimes{\rm Id}\big)f([g],[g'],[g'']) = q^{2n} \, \int_{U_n ^2}  \overline{\Psi}(a''a_2^{-1} \phi(a_1)t'') \,\big( \CF_{\Gamma_n}\psi_{a,a',a'',a_1,a_2,[t],[t']}\big)
(\phi(a_2)) \, da_1da_2.
 \end{align}

In order to compute $\big({\rm Id}\otimes \hat\Delta(F^*)\big) (1\otimes F^*)f$ and $\big(\hat\Delta(F^*)\otimes{\rm Id}\big)( F^*\otimes1)f$, we will substitute $f$ by $(1\otimes F^*)f$
in   \eqref{form} and
$f$ by $(F^*\otimes 1)f$
in   \eqref{formz}. To do that, we first need a convenient formula for  $F^*h$, with $h\in\CD(X_n\times X_n)$. Using (partially) the factorization 
given in Proposition \eqref{P},  we get:
\begin{align*}
 F^*h([g_1],[g_2]) &= q^{2n} \, \int_{U_n^2}  \Psi( a_1 a_3^{-1}\phi(a_4)t_1) \, \overline{\Psi}( a_2a_4^{-1}\phi(a_3) t_2) \\&
\qquad\qquad\qquad \times (\CF_{\Gamma_n} \otimes \CF_{\Gamma_n}) h\big ( a_1 a_3^{-1},- a_1 a_3^{-1}\phi(a_4) ;  a_2 a_4^{-1},  a_2a_4^{-1}\phi(a_3) \big )  \, da_3da_4 .
\end{align*}
From this, we deduce:
\begin{align}
\label{trois}
 & \big ( \CF_{\Gamma_n} \otimes 1 \otimes 1  \big ) (1 \otimes F^*) f\big( a a_1^{-1}, - a a_1^{-1}\phi(a_2) ; a'a_2^{-1},[t'-a_2a'^{-1}t_2]; a'' a_2^{-1},[t''- a_2 a''^{-1}t_2]\big)   \\ 
&\qquad\quad=  q^{2n} \, \int_{U_n ^2} \,  \Psi \big ( a' a_2^{-1}a_3^{-1} \phi(a_4)  (t'- a_2a'^{-1}t_2) \big ) \, \overline{\Psi} \big (a''a_2^{-1}a_4^{-1}\phi(a_3)(t''-a_2a''^{-1}t_2) \big ) \nonumber\\
 &\qquad\qquad\qquad\qquad\times  (\CF_{\Gamma_n} \otimes  \CF_{\Gamma_n} \otimes  \CF_{\Gamma_n}) f \big ( aa_1^{-1}, -aa_1^{-1}\phi(a_2)  ;  a' a_2^{-1}a_3^{-1},
 - a'a_2^{-1}a_3^{-1}\phi(a_4) ;
 \nonumber\\
 &\hspace{8.5cm} a''a_2^{-1}a_4^{-1},  a''a_2^{-1}a_4^{-1}\phi(a_3)  \big )\, da_3da_4,\nonumber\\
\label{troisbis}
& \big ( 1\otimes 1 \otimes \CF_{\Gamma_n}   \big ) (F^* \otimes 1) f \big (a a_1^{-1},[t- a_1a^{-1}t_1]  ; a' a_1^{-1},[t'-a_1a'^{-1}t_1];a''a_2^{-1},a_2^{-1}a''\phi(a_1) \big )  \\ 
&\qquad\quad=  q^{2n} \, \int_{U_n ^2} \,  \Psi \big ( a a_1^{-1}a_3^{-1} \phi(a_4)  (t- a_1a^{-1}t_1) \big ) \,
 \overline{\Psi} \big (a'a_1^{-1}a_4^{-1}\phi(a_3)(t'-a_1a'^{-1}t_1) \big ) \nonumber\\
 &\qquad\qquad\times  
 (\CF_{\Gamma_n} \otimes  \CF_{\Gamma_n} \otimes  \CF_{\Gamma_n}) f \big ( aa_1^{-1}a_3^{-1}, -aa_1^{-1}a_3^{-1}\phi(a_4)  
 ;  a' a_1^{-1}a_4^{-1}, a'a_1^{-1}a_4^{-1}\phi(a_3) ;
 \nonumber\\
 &\hspace{8.9cm} a''a_2^{-1},  a''a_2^{-1}\phi(a_1)  \big )\, da_3da_4.\nonumber
\end{align}
Using that $\phi(a_3) a_4^{-1}- \phi(a_4) a_3^{-1}  = \phi(a_3a_4^{-1}),$  \eqref{trois} and \eqref{troisbis} are respectively
given by:
\begin{align*}
&  q^{2n} \, \int_{U_n ^2}  \Psi \big (\phi(a_3 a_4^{-1}) t_2 \big ) \, \Psi \big (  a' a_2^{-1} a_3^{-1} \phi(a_4)   t' \big ) \, 
\overline{\Psi} \big ( a''a_2^{-1} a_4^{-1} \phi(a_3) t'' \big )  
  (\CF_{\Gamma_n} \otimes  \CF_{\Gamma_n} \otimes  \CF_{\Gamma_n}) f \\
  &\qquad\qquad\big ( aa_1^{-1}, -aa_1^{-1}\phi(a_2) ;  a' a_2^{-1}a_3^{-1},- a'a_2^{-1}a_3^{-1} \phi(a_4);a''a_2^{-1}a_4^{-1},  a'' a_2^{-1}a_4^{-1}\phi(a_3) \big ) \, da_3da_4,
\end{align*}
and by
 \begin{align*}
&  q^{2n} \, \int_{U_n ^2}   \Psi\big(\phi(a_3a_4^{-1})t_1 \big ) \,  \Psi \big ( a a_1^{-1} a_3^{-1} \phi(a_4) t\big)
\,
 \overline{\Psi} \big (a'a_1^{-1}a_4^{-1}\phi(a_3)t'\big)\,
  (\CF_{\Gamma_n} \otimes  \CF_{\Gamma_n} \otimes  \CF_{\Gamma_n}) f \\
 &\qquad\qquad\big ( aa_1^{-1}a_3^{-1}, -aa_1^{-1}a_3^{-1}\phi(a_4)  
 ;  a' a_1^{-1}a_4^{-1}, a'a_1^{-1}a_4^{-1}\phi(a_3) ;
 a''a_2^{-1},  a''a_2^{-1}\phi(a_1)  \big )\, da_3da_4.
\end{align*}

By the dilation $a_3 \mapsto a_4a_3$ and the change of variable $x_3 = \phi(a_3)$, \eqref{trois} and \eqref{troisbis}  become:
\begin{align*}
&   q^{2n} \, \int_{\varpi^n\CO_{\k} \times U_n}  \Psi \big (x_3 t_2 \big ) \, \Psi \big (  a' a_2^{-1}  a_4^{-1} \phi(a_4) (\phi^{-1}(x_3))^{-1} t' \big ) \, 
\overline{\Psi} \big ( a''a_2^{-1}a_4^{-1} \phi(\phi^{-1}(x_3) a_4)  t'' \big )  \\
& \times  (\CF_{\Gamma_n} \otimes  \CF_{\Gamma_n} \otimes  \CF_{\Gamma_n}) f \big ( aa_1^{-1}, -aa_1^{-1}\phi(a_2)   ;  a' a_2^{-1}a_4^{-1}(\phi^{-1}(x_3))^{-1},
- a'a_2^{-1}a_4^{-1}\phi(a_4)(\phi^{-1}(x_3))^{-1}; \\
 &\hspace{8.5cm} a''a_2^{-1}a_4^{-1},  a''a_2^{-1}a_4^{-1}\phi(\phi^{-1}(x_3)a_4)  \big) \, dx_3da_4,
\end{align*}
and
\begin{align*}
&  q^{2n} \, \int_{\varpi^n\CO_\k \times U_n}  \Psi\big(x_3t_1 \big )  \, \Psi \big ( a a_1^{-1}a_4^{-1} \phi(a_4)( \phi^{-1}(x_3))^{-1} t\big)
\,
 \overline{\Psi} \big (a'a_1^{-1}a_4^{-1}\phi(\phi^{-1}(x_3)a_4)t'\big)
 \\
& \times  (\CF_{\Gamma_n} \otimes  \CF_{\Gamma_n} \otimes  \CF_{\Gamma_n}) f \big ( aa_1^{-1}a_4^{-1}(\phi^{-1}(x_3))^{-1}, -aa_1^{-1}a_4^{-1}\phi(a_4)(\phi^{-1}(x_3))^{-1}  
 ;  \nonumber\\
 &\hspace{3cm}a' a_1^{-1}a_4^{-1}, a'a_1^{-1}a_4^{-1}\phi(\phi^{-1}(x_3)a_4) ;
  a''a_2^{-1},  a''a_2^{-1} \phi(a_1) \big )\, dx_3da_4.
\end{align*}

Let us  introduce, for fixed $a,a',a'',a_1,a_2,a_4\in U_n$ and $[t],[t'],[t'']\in\Gamma_n$, the following functions
\begin{align*}
&\tilde\vf_{a,a',a'',a_1,a_2,a_4,[t'],[t'']}(x_3):= \Psi \big (  a' a_2^{-1}  a_4^{-1} \phi(a_4) (\phi^{-1}(x_3))^{-1} t' \big ) \, 
\overline{\Psi} \big ( a''a_2^{-1}a_4^{-1} \phi(\phi^{-1}(x_3) a_4)  t'' \big )  \\
& \times  (\CF_{\Gamma_n} \otimes  \CF_{\Gamma_n} \otimes  \CF_{\Gamma_n}) f \big ( aa_1^{-1}, -aa_1^{-1}\phi(a_2)   ;  a' a_2^{-1}a_4^{-1}(\phi^{-1}(x_3))^{-1},
- a'a_2^{-1}a_4^{-1}\phi(a_4)(\phi^{-1}(x_3))^{-1}; \\
 &\hspace{9cm} a''a_2^{-1}a_4^{-1},  a''a_2^{-1}a_4^{-1}\phi(\phi^{-1}(x_3)a_4)  \big) ,\\
&\tilde\psi_{a,a',a'',a_1,a_2,a_4,[t],[t']}(x_3):=\Psi \big ( a a_1^{-1}a_4^{-1} \phi(a_4)( \phi^{-1}(x_3))^{-1} t\big)
\,
 \overline{\Psi} \big (a'a_1^{-1}a_4^{-1}\phi(\phi^{-1}(x_3)a_4)t'\big)\\
& \times  (\CF_{\Gamma_n} \otimes  \CF_{\Gamma_n} \otimes  \CF_{\Gamma_n}) f \big ( aa_1^{-1}a_4^{-1}(\phi^{-1}(x_3))^{-1}, -aa_1^{-1}a_4^{-1}\phi(a_4)(\phi^{-1}(x_3))^{-1}  
 ;  \nonumber\\
 &\hspace{6cm}a' a_1^{-1}a_4^{-1}, a'a_1^{-1}a_4^{-1}\phi(\phi^{-1}(x_3)a_4) ;
  a''a_2^{-1},  a''a_2^{-1} \phi(a_1) \big ).
\end{align*}
Using the isomorphism $\widehat\Gamma_n\sim \varpi^{n}\CO_\k$, we then see that the integral over $x_3\in\varpi^n\CO_\k$ can be rewritten as an inverse Fourier
transform, and we get:
\begin{align*}
 & \big ( \CF_{\Gamma_n} \otimes 1 \otimes 1  \big ) (1 \otimes F^*) f\big( a a_1^{-1}, - a a_1^{-1}\phi(a_2) ; a'a_2^{-1},[t'-a_2a'^{-1}t_2]; a'' a_2^{-1},[t''- a_2 a''^{-1}t_2]\big)   \\ 
&\hspace{8cm}= \int_{U_n}\CF_{\Gamma_n}^{-1}\tilde\vf_{a,a',a'',a_1,a_2,a_4,[t'],[t'']}([-t_2])\,da_4,\\
& \big ( 1\otimes 1 \otimes \CF_{\Gamma_n}   \big ) (F^* \otimes 1) f \big (a a_1^{-1},[t- a_1a^{-1}t_1]  ; a' a_1^{-1},[t'-a_1a'^{-1}t_1];a''a_2^{-1},a_2^{-1}a''\phi(a_1) \big )  \\   
&\hspace{8cm}= \int_{U_n}\CF_{\Gamma_n}^{-1}\tilde\psi_{a,a',a'',a_1,a_2,a_4,[t],[t']}([-t_1])\,da_4.
\end{align*}
 Plugging these expressions in \eqref{form} and \eqref{formz}, we get after the cancellation of the Fourier  transforms:
\begin{align*}
& \big({\rm Id}\otimes \hat\Delta(F^*)\big) (1\otimes F^*)f([g],[g'],[g'']) \\
&=  \int_{ U_n^3}    \Psi \big ( a a_1^{-1} \phi(a_2) t \big ) \,  \Psi \big ( a' a_1^{-1}a_2^{-1}     a_4^{-1} \phi(a_4) t' \big ) \, \overline\Psi\big ( a'' a_2^{-1}   a_4^{-1} \phi(a_1a_4) t'')   (\CF_{\Gamma_n} \otimes \CF_{\Gamma_n} \otimes \CF_{\Gamma_n}) f \\ &
\qquad \big (  a a_1^{-1},  -a a_1^{-1}\phi(a_2)  ; a' a_1^{-1}a_2^{-1}a_4^{-1}, -a'a_1^{-1}a_2^{-1}a_4^{-1} \phi(a_4) ;  a'' a_2^{-1}a_4^{-1}, a'' a_2^{-1}a_4^{-1}\phi(a_1a_4) \big) \,  da_1da_2da_4 . 
\end{align*}
and
\begin{align*}
&\big(\hat\Delta(F^*)\otimes{\rm Id}\big)( F^*\otimes1)f([g],[g'],[g''])  \\ & =  \int_{U_n^3}    \Psi \big (  aa_2a_1^{-1} a_4^{-1}\phi(a_4)  t \big )   \,  
\overline\Psi \big ( a'a_1^{-1}a_4^{-1}\phi(a_4a_2^{-1})t' \big )  \, \overline \Psi\big (a''a_2^{-1}  \phi(a_1)t'' \big )  (\CF_{\Gamma_n} \otimes \CF_{\Gamma_n} \otimes \CF_{\Gamma_n}) f
\\
&\qquad \big(aa_2a_1^{-1} a_4^{-1}, aa_2a_1^{-1} a_4^{-1}\phi(a_4^{-1}), a'a_1^{-1}a_4^{-1}, a'a_1^{-1}a_4^{-1} \phi(a_4 a_2^{-1});  a''a_2^{-1},  a''a_2^{-1}\phi(a_1) \big )  \, 
da_1da_2da_4.
\end{align*}

The last point is to show that the two above expressions are actually the same.
Performing the dilations $a_2 \mapsto a_4 a_2$ and  $a_1 \mapsto a_2a_1$, the last expression becomes:
\begin{align*}
& \big(\hat\Delta(F^*)\otimes{\rm Id}\big)( F^*\otimes1)f([g],[g'],[g'']) \\ &  
 =  \int_{U_n^3}    \Psi \big ( aa_1^{-1}\phi(a_4) t \big )  \,  \Psi \big ( a'a_1^{-1}a_2^{-1}a_4^{-1}\phi( a_2 )  t' \big ) \, 
  \overline\Psi \big (a'' a_2^{-1}a_4^{-1}  \phi(a_1a_2)t'' \big )  (\CF_{\Gamma_n} \otimes \CF_{\Gamma_n} \otimes \CF_{\Gamma_n}) f
  \\ & \big (aa_1^{-1}, -aa_1^{-1}\phi(a_4) ,  
 a' a_1^{-1}a_2^{-1}a_4^{-1},- a'a_1^{-1}a_2^{-1}a_4^{-1}\phi(a_2)  ; a''a_2^{-1}a_4^{-1}, a''a_2^{-1}a_4^{-1}  \phi(a_1a_2) \big ) \,  da_1da_2 da_4,
\end{align*}
which, after the relabelling $a_2\leftrightarrow a_4$, is exactly  the formula we found for  $\big({\rm Id}\otimes \hat\Delta(F^*)\big) (1\otimes F^*)f$.

\end{document}